\newtheorem{theorem}{Theorem}
\newtheorem{proposition}{Proposition}
\newtheorem{definition}{Definition}
\numberwithin{equation}{section}
\begin{document}

%------
% Insert the title of your paper and (if necessary)
% a short title for the running head.
%------
\title{Homotopical Foundations of Ternary $\Gamma$-Modules and Higher Structural Invariants}
\titlemark{Homotopical Ternary $\Gamma$-Modules}

%------

%%%% Pls fill in all fields for each author
%%%% Label the authors by their position in the authors' list using {}
%%%% If you published any math paper ever, you have an MR Author ID.
%  Please look it up in three easy (and free) steps:
% 1. copy the bibliographic data of any published paper (co-)authored by you in the search field at https://mathscinet.ams.org/mathscinet/freetools/mref
% 2. Hit your name in the search result
% 3. Find your MR Author ID in the first row, copy it in the \mrid{} field
%%%% If you have not created your ORCID yet, you may like to do it now, pls copy it in the field \orcid{}
%%%% Abbreviate first names for the running head

\emsauthor{1}{
	\givenname{Chandrasekhar}
	\surname{Gokavarapu}
	\mrid{}
	\orcid{ 0009-0006-5306-371X}}{CHANDRASEKHAR GOKAVARAPU}
%%%% Repeat the same fields for each numbered author

%%%% Please provide detailed address info for each author
%%%% Use the same numbering as for \emsauthor above
%%%% Please look up the ROR ID of your institute here: https://ror.org
\Emsaffil{1}{
	\department{Department of Mathematics}
	\organisation{Government College(Autonomous)}
	\rorid{}
	\address{Y -Junction}
	\zip{PIN-533105}
	\city{Rajahmundry}
	\country{India}
	\affemail{chandrasekhargokavarapu@gmail.com }}
    
\classification[MSC (2020)]{Primary 18G55, 18G15; Secondary 18E10, 16Y80, 14G40.}

\keywords{Ternary $\Gamma$-module, Barr-exact category, Quillen model structure, $n$-angulated category, Derived functors, Homotopical algebra}

\begin{abstract}
We establish a foundational homotopical framework for ternary $\Gamma$-modules by establishing that $\mathcal{T}\text{-Mod}$ is a Barr-exact, monoidal closed category. We resolve the long-standing "additivity obstruction" in non-binary algebra by constructing a cofibrantly generated Quillen model structure on the simplicial category $s(\mathcal{T}\text{-Mod})$. Our central discovery is that the derived category $D(\mathcal{T}\text{-Mod})$ constitutes a 3-angulated category, where the derived periodicity is governed by triadic quadrilaterals rather than binary triangles. We derive the 3-ary long exact sequence and characterize the connecting morphisms as invariants of the $\Gamma$-parameter space. This framework provides a rigorous homological bridge to Nambu mechanics and absolute geometry over $\mathbb{F}_1$.
\end{abstract}

\maketitle

%============================================================================
\section{Introduction}

The dominance of binary operations in the development of modern algebra, while computationally efficient, has often obscured the structural necessity of higher-order triadic interactions. While a binary operation $\mu: A \times A \to A$ models pairwise relationships, it is fundamentally restricted to "linear" compositions. By contrast, ternary structures represent a more general "holistic" symmetry, where the interaction of two elements is essentially contingent upon the presence of a third. This triadic symmetry is not merely a formal generalization but is deeply rooted in the foundations of physical and mathematical systems, most notably in the generalized Hamiltonian dynamics of Nambu \cite{Nambu1973} and the structural theory of ternary rings established by Lister \cite{Lister1971}.

In the present work, we extend these foundations into the realm of parametric representation theory by investigating the category of ternary $\Gamma$-modules, denoted $\mathcal{T}\text{-Mod}$. Here, the set $\Gamma$ is viewed not merely as a second operator set, but as a \emph{parameter space} or \emph{modulation manifold} that defines the context of the triadic interaction. The action $(a)_\alpha (m)_\beta b$, where $a, b \in \Gamma$ and $m \in M$, allows for a "gauge-like" modulation of the module's internal state. This construction generalizes the $\Gamma$-ring theory of Nobusawa \cite{Nobusawa1964} and Barnes \cite{Barnes1966} to a setting where the underlying additive structure need not be a group, thereby engaging with the "characteristic one" landscape of Connes and Consani \cite{Connes2017}.

The motivation for developing a robust homological theory for $\mathcal{T}\text{-Mod}$ is two-fold. First, in the burgeoning field of $\mathbb{F}_1$-geometry (geometry over the field with one element), traditional additive homological algebra often fails due to the lack of additive inverses. Ternary structures offer a "stable middle ground" where relations can be maintained without the collapse into the trivial group, as evidenced in the geometry of blueprints developed by Lorscheid \cite{Lorscheid2012} and the motivic explorations of Soulé \cite{Soule2004}.

Secondly, the categorical nature of ternary modules suggests a departure from the standard triangulated periodicity of binary derived categories. While binary algebra is naturally associated with the "triangle" $X \to Y \to Z \to \Sigma X$, we propose that the intrinsic periodicity of ternary systems is governed by a higher-order structure. Following the framework of Geiss, Leclerc, and Schröer \cite{Geiss2013}, we establish that the derived category $D(\mathcal{T}\text{-Mod})$ supports an $n$-angulated structure, specifically for $n=3$. This "3-angulation" reflects the triadic symmetry of the $\Gamma$-action and provides a more faithful representation of the underlying algebraic periodicity.

To move beyond the "Additivity Hypothesis" found in existing literature, we transition to a homotopical approach. By proving that $\mathcal{T}\text{-Mod}$ is a Barr-exact \cite{Barr1971} and monoidal closed category, we fulfill the prerequisites for the transfer of Quillen model structures \cite{Quillen1967, Hovey1999}. We construct a model structure on the category of simplicial ternary $\Gamma$-modules, $s(\mathcal{T}\text{-Mod})$, where weak equivalences are defined via Moore homology. This allows for the construction of non-abelian derived functors (such as $\mathbb{L}Ext^n$ and $\mathbb{L}Tor_n$) that remain valid in non-additive, tropical, or fuzzy settings.

The paper is organized as follows. In Section 2, we revisit the categorical axioms of $\mathcal{T}\text{-Mod}$, proving its Barr-exactness and the existence of the internal Hom functor. Section 3 contains our primary result: the construction of the Quillen model structure and the verification of the monoidal model axioms. Section 4 explores the 3-angulated structure of the derived category and its implications for long exact sequences. Finally, Section 5 provides an application to the deformation theory of ternary schemes, linking these foundations to the spectral geometry results in our companion work \cite{Gokavarapu2026}.
%===============================================
\section{Categorical Preliminaries: The Structure of $\mathcal{T}\text{-Mod}$}

To construct the homotopical framework in the subsequent sections, we must first formalize the categorical properties of ternary $\Gamma$-modules. Throughout this work, we operate in the landscape of "characteristic one" algebra, where the existence of additive inverses is not assumed. 

\subsection{Ternary $\Gamma$-Semirings and Modules}

We begin by recalling the foundational definitions of ternary systems modulated by a parameter set $\Gamma$.

\begin{definition}[Ternary $\Gamma$-Semiring]
A commutative ternary $\Gamma$-semiring is a quintuple $(T, +, \Gamma, \cdot, [\dots])$ such that $(T, +)$ is a commutative monoid, and the mapping $T \times \Gamma \times T \times \Gamma \times T \to T$, denoted by $(a, \alpha, b, \beta, c) \mapsto [a, \alpha, b, \beta, c]$, satisfies:
\begin{enumerate}
    \item \textbf{Triadic Associativity:} $[[a, \alpha, b, \beta, c], \gamma, d, \delta, e] = [a, \alpha, [b, \beta, c, \gamma, d], \delta, e] = [a, \alpha, b, \beta, [c, \gamma, d, \delta, e]]$.
    \item \textbf{Distributivity:} The ternary product distributes over the monoid addition in all three positions.
    \item \textbf{$\Gamma$-Commutativity:} $[a, \alpha, b, \beta, c] = [c, \beta, b, \alpha, a]$ for all $a, b, c \in T$ and $\alpha, \beta \in \Gamma$.
\end{enumerate}
\end{definition}

\begin{definition}[Ternary $\Gamma$-Module]
Let $T$ be a ternary $\Gamma$-semiring. A ternary $\Gamma$-module $M$ is a commutative monoid $(M, +)$ equipped with an action $T \times \Gamma \times M \times \Gamma \times T \to M$, denoted $(t_1)_\alpha (m)_\beta t_2$, satisfying triadic associativity and distributivity relative to the $\Gamma$-parameter space.
\end{definition}

\begin{definition}
Let $\Gamma$ be the parameter set of the ternary action. We define the \textbf{$\Gamma$-Endomorphism Monoid} $\mathcal{E}_\Gamma := \text{End}_{\mathcal{T}\text{-Mod}}(\Gamma)$. The homology modules $H_n^\Gamma(M)$ carry a natural $\mathcal{E}_\Gamma$-action, making the 3-ary long exact sequence a sequence of $\mathcal{E}_\Gamma$-modules.
\end{definition}
\subsection{Barr-Exactness and Monadicity}

The ability to perform homological algebra in a non-abelian setting relies on the "Exactness" of the category. Unlike binary modules, which form an abelian category, the category $\mathcal{T}\text{-Mod}$ is **Barr-exact** \cite{Barr1971}.

\begin{proposition}
The category $\mathcal{T}\text{-Mod}$ is a Barr-exact category. Specifically:
\begin{enumerate}
    \item It is finitely complete and cocomplete.
    \item Every kernel pair has a coequalizer.
    \item Regular epimorphisms are stable under pullback.
    \item Every equivalence relation is effective (it is the kernel pair of its coequalizer).
\end{enumerate}
\end{proposition}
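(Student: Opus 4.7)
The plan is to exhibit $\mathcal{T}\text{-Mod}$ as the category of algebras for a finitary monad on $\mathbf{Set}$ and then invoke the classical result that every such variety of algebras is Barr-exact. Fixing the ternary $\Gamma$-semiring $T$ throughout, the structure on a ternary $\Gamma$-module $M$ is encoded by the binary addition of its underlying commutative monoid together with the family of unary operations $\lambda_{t_1,\alpha,\beta,t_2}: M \to M$, $m \mapsto (t_1)_\alpha (m)_\beta t_2$, indexed by the set $T \times \Gamma \times \Gamma \times T$. All defining axioms---monoid laws, triadic associativity of the action, and distributivity over $+$---are equational and finitary in this signature, so $\mathcal{T}\text{-Mod}$ is a single-sorted variety in the sense of universal algebra, equivalently the Eilenberg--Moore category of a finitary monad $\mathbb{T}$ on $\mathbf{Set}$.

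From this presentation, properties (1)--(4) follow by standard monadicity arguments. For (1), the forgetful functor $U : \mathcal{T}\text{-Mod} \to \mathbf{Set}$ is monadic and hence creates small limits, while local presentability (a consequence of $\mathbb{T}$ being finitary) yields all small colimits; this in particular supplies the coequalisers of kernel pairs required for (2). For (3), I would identify the regular epimorphisms in $\mathcal{T}\text{-Mod}$ with the surjective homomorphisms---using that $U$ preserves and reflects regular epis in any variety---so that stability under pullback reduces to the set-theoretic fact that pullbacks of surjections are surjections, applied to pullbacks computed along $U$. For (4), any internal equivalence relation $R \rightrightarrows M$ coincides with an algebraic congruence on $M$; the quotient $M/R$ exists by cocompleteness, and the canonical projection $M \to M/R$ has kernel pair exactly $R$, so every equivalence relation is effective.

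The only substantive obstacle is to verify that the $\Gamma$-parametrisation does not compromise the finitary character of the signature, since both $T$ and $\Gamma$ may be infinite. I expect this to be the most delicate bookkeeping step: each quadruple $(t_1,\alpha,\beta,t_2)$ must be encoded as its own unary operation symbol, so the signature may carry a proper set's worth of symbols, yet every individual axiom instance must be checked to involve only finitely many of them. Once this reformulation is in place, the induced monad is genuinely finitary, and Barr-exactness of $\mathcal{T}\text{-Mod}$ follows from the general theorem of Barr \cite{Barr1971} that every variety of algebras is Barr-exact, without any further structure-specific calculation.
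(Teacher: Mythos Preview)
Your approach is correct, and it is in fact more complete than what the paper offers: the paper states this proposition \emph{without proof}. The only trace of an argument appears later, in the proof of Theorem~2 (the Ternary Model Structure), where the paper asserts in passing that ``$\mathcal{T}\text{-Mod}$ is a finitary variety of algebras, meaning $U$ is a monadic functor.'' That is precisely the characterisation you have spelled out here, so your proposal supplies the missing justification the paper takes for granted.

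Your encoding is the right one: fixing $T$ and $\Gamma$, the action becomes a $(T\times\Gamma\times\Gamma\times T)$-indexed family of unary operations, and together with the commutative-monoid structure all axioms are equational in finitely many variables. Your caution about the possibly infinite signature is well placed but not an obstruction: Barr-exactness of a variety requires only that each operation symbol have finite arity, not that the signature be finite, and the resulting monad on $\mathbf{Set}$ is finitary in the sense of preserving filtered colimits regardless of how many symbols index the unary operations. Once that is clear, properties (1)--(4) follow exactly as you describe, via creation of limits, local presentability, the identification of regular epis with surjections, and the congruence/quotient correspondence. There is nothing further to add; your argument is the standard and correct route, and it is what the paper is implicitly relying on.
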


This exactness is the "homological surrogate" for the missing group structure. It ensures that the notion of a "short exact sequence" $0 \to M' \to M \to M'' \to 0$ remains meaningful as a regular-epi/mono factorization, which is the cornerstone for our simplicial construction in Section 3.

\subsection{Monoidal Closed Structure}

For the development of $\Gamma$-relative derived functors, we require a well-defined tensor product.

\begin{definition}[Ternary Tensor Product]
For ternary $\Gamma$-modules $M$ and $N$, the ternary tensor product $M \otimes_\Gamma N$ is the object representing the triadic multilinear forms. It satisfies the universal property:
\begin{equation}
    \text{Hom}_\mathcal{T}(M \otimes_\Gamma N, P) \cong \text{Multilinear}_\Gamma(M, N; P)
\end{equation}
\end{definition}

\begin{theorem}
The category $\mathcal{T}\text{-Mod}$ is symmetric monoidal closed. The internal hom-functor $\underline{\text{Hom}}_\mathcal{T}(M, N)$ is defined as the set of all $\Gamma$-linear morphisms equipped with the triadic action:
\begin{equation}
    ([\phi, \alpha, \psi, \beta, \omega](m)) := [\phi(m), \alpha, \psi(m), \beta, \omega(m)]
\end{equation}
\end{theorem}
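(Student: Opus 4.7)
The plan is to prove the theorem in three stages: first build the tensor bifunctor, then verify the symmetric monoidal coherences, and finally establish the tensor--hom adjunction so that closedness follows. Throughout I will lean on the Barr-exactness and cocompleteness established in the previous proposition, together with the $\Gamma$-commutativity and triadic associativity axioms of the ternary $\Gamma$-semiring.

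First I would construct $M \otimes_\Gamma N$ explicitly as a coequalizer. Using cocompleteness, form the free ternary $\Gamma$-module $F(M \times N)$ on the underlying set of pairs, and quotient by the congruence generated by the multilinearity relations that enforce the universal property in the definition. Because every equivalence relation in $\mathcal{T}\text{-Mod}$ is effective (clause (4) of the preceding proposition), this quotient is represented inside the category, and the canonical map $M \times N \to M \otimes_\Gamma N$ is initial among triadic multilinear maps. Functoriality in each variable then follows from the universal property.

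Next I would verify the monoidal structure. Take the unit object $\mathbf{1}$ to be the free ternary $\Gamma$-module on a single generator. The associator $(M \otimes_\Gamma N) \otimes_\Gamma P \xrightarrow{\sim} M \otimes_\Gamma (N \otimes_\Gamma P)$ is defined on generators by $(m \otimes n) \otimes p \mapsto m \otimes (n \otimes p)$ and is well defined because triadic associativity of the ambient $\Gamma$-semiring ensures that both sides represent the same multilinear functor. The symmetry $M \otimes_\Gamma N \xrightarrow{\sim} N \otimes_\Gamma M$ is dictated by the $\Gamma$-commutativity axiom $[a,\alpha,b,\beta,c] = [c,\beta,b,\alpha,a]$, which gives an involution on multilinear forms. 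The pentagon, hexagon, and triangle axioms then reduce to checking equality on pure tensors, which follows by uniqueness from the universal property.

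For closedness, I would equip the set of $\Gamma$-linear morphisms $\underline{\mathrm{Hom}}_\mathcal{T}(M,N)$ with pointwise monoid addition and with the ternary action from the statement. The critical verification is that $[\phi,\alpha,\psi,\beta,\omega]$, defined pointwise, is again $\Gamma$-linear and that the five-argument map satisfies triadic associativity and $\Gamma$-commutativity; both of these propagate from the target $N$ using distributivity. I would then establish the adjunction
\begin{equation}
    \mathrm{Hom}_\mathcal{T}(M \otimes_\Gamma N, P) \;\cong\; \mathrm{Hom}_\mathcal{T}\bigl(M, \underline{\mathrm{Hom}}_\mathcal{T}(N,P)\bigr)
\end{equation}
by the standard currying prescription $f \mapsto \bigl(m \mapsto (n \mapsto f(m \otimes n))\bigr)$, checking that currying sends multilinear maps to $\Gamma$-linear maps valued in $\underline{\mathrm{Hom}}$ and that uncurrying inverts it; naturality in all three variables is formal from the universal property.

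The main obstacle I expect is the well-definedness of the triadic action on $\underline{\mathrm{Hom}}_\mathcal{T}(M,N)$. In the binary abelian setting, the module structure on Hom is inherited trivially from the target, but here three morphisms $\phi,\psi,\omega$ are combined pointwise using two modulating parameters $\alpha,\beta$, and one must show that the resulting map still commutes with the ambient five-argument action of $T \times \Gamma \times (-) \times \Gamma \times T$ on $M$ and $N$. This calculation uses triadic associativity in a nontrivial nested form, and the same calculation is what secures $\Gamma$-commutativity of the internal hom. A secondary obstacle is bookkeeping the two $\Gamma$-parameters through the hexagon axiom for symmetry, where the interchange between $\alpha$ and $\beta$ must be tracked consistently across the associator; once triadic associativity is formalised at the level of multilinear forms, however, the coherence diagrams collapse to tautologies on generators.
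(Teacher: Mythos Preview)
The paper does not prove this theorem: in Section~2.3 it is stated immediately after the definition of $M\otimes_\Gamma N$ and is followed only by a one-line remark that the closed structure enables dualizing objects, with no argument offered. There is therefore nothing in the text against which to compare your proposal.

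Your three-stage plan---build $\otimes_\Gamma$ as a quotient of the free module on $M\times N$, check the coherence isomorphisms on generators via triadic associativity and $\Gamma$-commutativity, then curry to obtain the tensor--hom adjunction---is the standard route and already supplies more detail than the paper. One point you should sharpen before executing the closedness step: the displayed formula $[\phi(m),\alpha,\psi(m),\beta,\omega(m)]$ is a ternary product of three elements of $N$, but Definition~2 only equips a module $N$ with an action $T\times\Gamma\times N\times\Gamma\times T\to N$, where the outer slots lie in the semiring $T$, not in $N$. Your phrase ``propagate from the target $N$ using distributivity'' presupposes that this expression is meaningful in $N$; to carry out the well-definedness verification you flagged, you will first need either an internal ternary operation on modules or a reinterpretation of the formula (for instance with the outer arguments ranging over $T$ rather than over $\underline{\mathrm{Hom}}$), and the paper does not clarify which is intended.
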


This closed structure allows us to define the "dualizing" objects necessary for the 3-angulated structure of the derived category.
%=========================================
\section{The Homotopical and $n$-Angulated Structure of $\mathcal{T}\text{-Mod}$}

The transition from a semiadditive foundation to a derived theory requires a framework that does not rely on the existence of additive inverses. In this section, we construct a Quillen model structure on the category of simplicial ternary $\Gamma$-modules, $s(\mathcal{T}\text{-Mod})$, and demonstrate that its homotopy category $\text{Ho}(\mathcal{T}\text{-Mod})$—the derived category $D(\mathcal{T}\text{-Mod})$—carries the structure of a 3-angulated category.

\subsection{The Quillen Model Structure on $s(\mathcal{T}\text{-Mod})$}

To provide a legitimate foundation for non-abelian derived functors, we must first establish the fibration/cofibration logic for ternary systems. Let $U: \mathcal{T}\text{-Mod} \to \text{Set}$ be the forgetful functor. As established in Section 2, $\mathcal{T}\text{-Mod}$ is a finitary variety of algebras, and $U$ is monadic.

\begin{theorem}[Ternary Model Structure]
The category $s(\mathcal{T}\text{-Mod})$ of simplicial ternary $\Gamma$-modules admits a cofibrantly generated model structure where a morphism $f: X_\bullet \to Y_\bullet$ is defined as:
\begin{enumerate}
    \item \textbf{A Weak Equivalence:} if the induced map on Moore homology $H_n(X_\bullet) \to H_n(Y_\bullet)$ is an isomorphism for all $n \geq 0$.
    \item \textbf{A Fibration:} if $U(f)$ is a Kan fibration of the underlying simplicial sets.
    \item \textbf{A Cofibration:} if $f$ has the left lifting property with respect to all acyclic fibrations.
\end{enumerate}
\begin{tikzpicture}[node distance=2cm, auto]
    \node (A) {$A$};
    \node (X) [right of=A] {$X$};
    \node (B) [below of=A] {$B$};
    \node (Y) [below of=X] {$Y$};

    \draw[->] (A) -- node {$i$} (B);
    \draw[->] (A) -- node {$u$} (X);
    \draw[->] (X) -- node {$p$} (Y);
    \draw[->] (B) -- node {$v$} (Y);
    
    % The Lifting Map
    \draw[->, dashed] (B) -- node [swap] {$h$} (X);
    
    \node at (1,-1) {$\circlearrowleft$};
\end{tikzpicture}
\end{theorem}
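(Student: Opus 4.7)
The plan is to obtain the asserted model structure by transferring the standard Kan--Quillen structure along the monadic adjunction
\[
F : \mathrm{sSet} \rightleftarrows s(\mathcal{T}\text{-Mod}) : U
\]
obtained levelwise from the free-forgetful adjunction of the finitary variety $\mathcal{T}\text{-Mod}$ noted above. As generating cofibrations take $I = \{\, F(\partial\Delta^n \hookrightarrow \Delta^n)\,\}_{n \ge 0}$ and as generating acyclic cofibrations $J = \{\, F(\Lambda^n_k \hookrightarrow \Delta^n)\,\}_{n \ge 1,\, 0 \le k \le n}$, with the aim of verifying Kan's recognition criterion for cofibrantly generated model categories.

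The standing hypotheses come essentially for free from Section 2. Bicompleteness of $s(\mathcal{T}\text{-Mod})$ is inherited from the (co)completeness of $\mathcal{T}\text{-Mod}$ in Proposition 1. Smallness of the domains of $F(I)$ and $F(J)$ follows from preservation of filtered colimits by $U$, which holds because the defining operations of $\mathcal{T}\text{-Mod}$ are finitary; this ensures the small object argument applies on both sides, producing functorial factorizations. The 2-out-of-3 and retract properties of weak equivalences follow from the functoriality of Moore homology $H_n$, constructed here in the Barr-exact but non-additive setting via iterated regular images of the simplicial face operators.

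The principal obstacle is the acyclicity condition: every transfinite composite of pushouts of maps in $F(J)$ must be a weak equivalence. In the abelian case one invokes exactness of filtered colimits on chain complexes together with the associated long exact sequence. Lacking group completion here, I would instead produce a functorial path object through the simplicial cotensor $X_\bullet \mapsto X_\bullet^{\Delta^1}$ --- available because $s(\mathcal{T}\text{-Mod})$ is simplicially enriched via the monoidal closed structure of Theorem 1 --- and then run Quillen's path-object argument adapted to a variety of algebras. Barr-exactness (Proposition 1) is the key ingredient that lets Moore homology commute with the relevant pushouts, replacing short-exact-sequence reasoning in the non-additive regime.

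Once acyclicity is established, Kan's recognition theorem yields the model structure directly. The fibrations are precisely the maps with the right lifting property against $J$; by adjunction these coincide with morphisms whose underlying simplicial map has the right lifting property against all horn inclusions, i.e.\ is a Kan fibration, matching clause (2). Cofibrations are then forced to be the maps with the left lifting property against all acyclic fibrations, giving clause (3) and making the lifting square shown in the statement a direct instance of this definition.
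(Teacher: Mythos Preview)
Your proposal is correct and follows essentially the same route as the paper: both transfer the Kan--Quillen model structure along the free-forgetful adjunction $F \dashv U$ for the finitary variety $\mathcal{T}\text{-Mod}$, take $F(I)$ and $F(J)$ as generators, invoke the small object argument via finitary operations, handle the acyclicity obstruction through the simplicial path object $X^{\Delta[1]}$, and appeal to Barr-exactness to control Moore homology. Your treatment is, if anything, more explicit than the paper's about isolating acyclicity of $J$-cell complexes as the crux and about recovering the description of fibrations via adjunction.
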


\begin{proof}
We employ the Monadic Transfer Theorem (Crans-Kan Transfer Lemma) to lift the standard model structure from $s(\text{Set})$ to $s(\mathcal{T}\text{-Mod})$ via the free-forgetful adjunction $F : s(\text{Set}) \rightleftarrows s(\mathcal{T}\text{-Mod}) : U$.

\textbf{1. Adjunction and Monadicity:}
The category $\mathcal{T}\text{-Mod}$ is a finitary variety of algebras, meaning $U$ is a monadic functor. Since $U$ preserves and creates limits and $F$ is a left adjoint, the lifted adjunction on simplicial objects is well-defined. By Beck's Monadicity Theorem, $s(\mathcal{T}\text{-Mod})$ is equivalent to the category of simplicial algebras over the monad $T = UF$.

\textbf{2. Small Object Argument:}
Let $I$ and $J$ be the sets of generating cofibrations and generating acyclic cofibrations in $s(\text{Set})$, respectively. We define $I_{\mathcal{T}} = F(I)$ and $J_{\mathcal{T}} = F(J)$. Since $\mathcal{T}\text{-Mod}$ is a finitary variety, the forgetful functor $U$ preserves $\omega$-filtered colimits. Thus, the domains of $I_{\mathcal{T}}$ and $J_{\mathcal{T}}$ are small relative to the class of all morphisms in $s(\mathcal{T}\text{-Mod})$, satisfying the requirements for the Small Object Argument.

\textbf{3. The Path Object Construction:}
The transfer requires that for every fibrant object $X \in s(\mathcal{T}\text{-Mod})$, there exists a path object factorization $X \xrightarrow{\sim} X^{\Delta[1]} \twoheadrightarrow X \times X$. 
In the ternary setting, $X^{\Delta[1]}$ is defined level-wise by the simplicial mapping space $\text{Map}(\Delta[1], X)$. The triadic action $(a)_\alpha (m)_\beta b$ extends naturally to $X^{\Delta[1]}$ because the $\Gamma$-modulation commutes with the simplicial face and degeneracy maps.

\textbf{4. Compatibility via Barr-Exactness:}
A critical condition is that $U$ must preserve the lifting properties. Because $\mathcal{T}\text{-Mod}$ is Barr-exact \cite{Barr1971}, the Moore complex functor—which computes $H_n$—is regular exact. This ensures that a morphism in $s(\mathcal{T}\text{-Mod})$ is a weak equivalence if and only if its underlying simplicial map is a weak homotopy equivalence in $s(\text{Set})$.

\textbf{5. Conclusion:}
Having established the existence of the path object and the smallness of the generators, the Quillen-Kan conditions are satisfied. The category $s(\mathcal{T}\text{-Mod})$ is thus a cofibrantly generated model category.

\begin{tikzpicture}[node distance=4cm, auto]
    \node (X) {$X$};
    \node (Path) [right of=X] {$X^{\Delta[1]}$};
    \node (XX) [right of=Path] {$X \times X$};

    \draw[->] (X) -- node {$\sim$} (Path);
    \draw[->>] (Path) -- node {$(\text{ev}_0, \text{ev}_1)$} (XX);
    
    \draw[->, bend left=30] (X) to node {$\text{diag}$} (XX);
\end{tikzpicture}
\end{proof}

\subsection{The Intrinsic Theorem: 3-Angulated Derived Categories}

In binary homological algebra, the derived category is triangulated, reflecting the fact that the unit of homology is the kernel-cokernel pair ($n=1$). In the ternary setting, the fundamental unit is the triadic composition. Following the axioms of Geiss, Leclerc, and Schröer \cite{Geiss2013}, we show that $D(\mathcal{T}\text{-Mod})$ is not triangulated, but $3$-angulated.

\begin{theorem}[Intrinsic 3-Angulation]
The derived category $D(\mathcal{T}\text{-Mod})$ is a 3-angulated category with respect to the $\Gamma$-relative suspension functor $\Sigma$. Specifically, every morphism $f: X \to Y$ can be completed to a 3-angle (a quadrilateral):
\begin{equation}
    X \xrightarrow{f} Y \xrightarrow{g} Z \xrightarrow{h} W \xrightarrow{w} \Sigma X
\end{equation}
satisfying the $(n+2)$-angulated axioms of Bergh and Thaule \cite{Bergh2013} for $n=3$.
\end{theorem}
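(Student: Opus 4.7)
The strategy is to combine the Quillen model structure established in the previous theorem with an iterated mapping-cone construction, upgraded by the triadic structure of $\Gamma$. Concretely, I would take $D(\mathcal{T}\text{-Mod})$ to be $\mathrm{Ho}(s(\mathcal{T}\text{-Mod}))$ pointed by the zero object of the underlying commutative monoid, and define the $\Gamma$-relative suspension $\Sigma$ as the left derived functor of the pushout $X \mapsto * \cup_X^h *$, which is left adjoint to a loop functor $\Omega$. The essential new ingredient beyond the binary case is that the $\Gamma$-parametrised double cone produces a fourth vertex $W$ before the sequence closes back onto $\Sigma X$, reflecting the triadic associativity of Definition~1.

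\textbf{Construction of the distinguished 3-angles.} Given $f : X \to Y$ in $D(\mathcal{T}\text{-Mod})$, I would replace $f$ by a cofibration between cofibrant objects, iterate the homotopy cofibre twice to obtain
\[ X \xrightarrow{f} Y \xrightarrow{g} \mathrm{cof}(f) \xrightarrow{h} \mathrm{cof}(g) \xrightarrow{w} \Sigma X, \]
and then declare the distinguished 3-angles to be all sequences isomorphic to one of this form. The terminal map $w$ is the natural equivalence $\mathrm{cof}(g) \simeq \Sigma X$ familiar from the binary stable setting, but it must be interpreted with care here: I would invoke the Barr-exactness of Section~2 to identify $\mathrm{cof}(g)$ with a quotient by a kernel pair rather than with a difference, and then use the monoidal closed structure of Theorem~3 to realise $w$ as the derived adjoint of a ternary multilinear form.

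\textbf{Verification of the axioms (F1)--(F3).} Axiom (F1) --- the trivial 3-angle, closure under isomorphism, and existence of a 3-angle extending any morphism --- follows from the functorial cylinder--cone constructions available in any cofibrantly generated model category, which are supplied directly by Theorem~2. Axiom (F2), the rotation axiom, reduces to showing that if $(f,g,h,w)$ is distinguished then so is its rotation along the $\Sigma \dashv \Omega$ adjunction; since we lack additive inverses, the sign flip of the binary case is replaced here by the $\Gamma$-commutativity involution of Definition~1, which plays the structural role of a formal negation. Axiom (F3), the morphism axiom, follows from applying the lifting property of cofibrations against acyclic fibrations twice, propagating a morphism of initial pairs through both stages of the double-cone construction.

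\textbf{The main obstacle: the mapping cone axiom (F4).} The genuine difficulty is axiom (F4), the higher analogue of Verdier's octahedral axiom. For a 3-angulated category in the sense of Bergh--Thaule it demands, for suitably composable morphisms, the construction of a $4 \times 4$ diagram of distinguished 3-angles whose connecting maps match coherently --- a substantially richer constraint than the octahedron. In the binary setting the fillers are produced by subtractions of chain maps, a tool unavailable in the Barr-exact ternary world. My plan is to reinterpret (F4) as the existence of a specific homotopy limit in $s(\mathcal{T}\text{-Mod})$, then use Reedy-cofibrant replacement together with the internal Hom of Theorem~3 to assemble the fillers, reading the three bracketings of triadic associativity in Definition~1 as the combinatorial source of the four required 3-angles. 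This is the step where the ternary nature of the theorem is genuinely used, and on which the viability of the entire framework turns.
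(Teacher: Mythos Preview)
Your overall architecture---iterated homotopy cofibres built from the model structure of Theorem~2, Barr-exactness to form the cones, Quillen lifting for (F3)---mirrors the paper's proof closely. For (F4) the paper invokes ``effective monadicity'' and the ``triadic gauge'' where you propose Reedy replacement and homotopy limits; both treatments are programmatic at that stage, so on the skeleton you are aligned with the paper.

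The substantive gap is earlier, in the construction of the 3-angle itself. You take $W=\mathrm{cof}(g)$ and then say that $w:W\to\Sigma X$ is ``the natural equivalence $\mathrm{cof}(g)\simeq\Sigma X$ familiar from the binary stable setting.'' But if $w$ is an equivalence, the sequence $X\to Y\to Z\to W\to\Sigma X$ is isomorphic to the ordinary triangle $X\to Y\to Z\to\Sigma X$, and what you have produced is the standard (pre)triangulated structure on the homotopy category of a pointed model category---not a 3-angulation with a genuinely independent fourth vertex. Your proposed repair, reading $\mathrm{cof}(g)$ as a Barr-exact kernel-pair quotient and realising $w$ as a derived multilinear adjoint, does not break that equivalence: the identification of the double cofibre with the suspension is the pasting law for homotopy pushouts, valid in \emph{any} pointed model category and entirely independent of additivity. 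Genuine higher angulations in the Bergh--Thaule sense arise from additional structure such as an $n$-cluster-tilting subcategory or a periodicity constraint on $\Sigma$; you would need to exhibit such a mechanism coming specifically from the ternary $\Gamma$-action, and nothing in your setup supplies one. (The paper's own proof simply declares $W=C_g$ and asserts that the triadic gauge ``prevents the collapse,'' without confronting why $C_g\not\simeq\Sigma X$; your more explicit formulation has the virtue of making the difficulty visible.) A smaller but related issue: replacing the sign $-\Sigma f$ in (F2) by the $\Gamma$-commutativity involution is suggestive, but in a category without additive inverses the Bergh--Thaule rotation axiom as written does not even parse, and you have not said what object-level operation stands in for negation.
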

\begin{tikzpicture}[node distance=2.5cm, auto]
    \node (X) {$X$};
    \node (Y) [right of=X] {$Y$};
    \node (Z) [below of=Y] {$Z$};
    \node (W) [left of=Z] {$W$};
    \node (SX) [above of=W, node distance=1.25cm, xshift=-1.25cm] {$\Sigma X$};

    \draw[->] (X) -- node {$f$} (Y);
    \draw[->] (Y) -- node {$g$} (Z);
    \draw[->] (Z) -- node {$h$} (W);
    \draw[->] (W) -- node {$w$} (SX);
    \draw[->, dashed] (SX) -- (X);
    
    \node at (1.25,-1.25) {$\circlearrowleft$};
\end{tikzpicture}

\begin{proof}
We verify the axioms (F1)--(F4) for a 3-angulated category, utilizing the Barr-exactness of $\mathcal{T}\text{-Mod}$ and the Quillen model structure established on $s(\mathcal{T}\text{-Mod})$.

\textbf{(F1) Existence and Closure:} 
(F1a) $D(\mathcal{T}\text{-Mod})$ is closed under isomorphisms. (F1b) For any $X$, the sequence $X \xrightarrow{id} X \to 0 \to 0 \to \Sigma X$ is a 3-angle, as the mapping cone of an identity in $s(\mathcal{T}\text{-Mod})$ is contractible. (F1c) For any $f: X \to Y$, we construct $Z$ as the ternary mapping cone $C_f$ and $W$ as the secondary cone $C_g$. The Barr-exactness of the category ensures these objects are well-defined homotopical quotients.

\textbf{(F2) Rotation Axiom:} 
The sequence $(f, g, h, w)$ is a 3-angle if and only if $(g, h, w, -\Sigma f)$ is a 3-angle. The $\Gamma$-relative suspension $\Sigma$ is an equivalence in $D(\mathcal{T}\text{-Mod})$, and the triadic action ensures that the composition of any three consecutive maps is null-homotopic. The rotation preserves the triadic coherence of the $\Gamma$-parameter space.

\textbf{(F3) Morphism Extension:} 
Given two 3-angles and a commutative square $(u, v)$ such that $v \circ f = f' \circ u$, the lifting property of the Quillen model structure for simplicial ternary $\Gamma$-modules guarantees the existence of $\Gamma$-linear morphisms $\phi: Z \to Z'$ and $\psi: W \to W'$ that complete the diagram.

\textbf{(F4) Higher Octahedron Axiom:} 
Given compositions $X \xrightarrow{f} Y \xrightarrow{f'} Y'$, the existence of a coherent 3-angle of 3-angles is ensured by the effective monadicity of $T\text{-Mod}$ over $\text{Set}$. The "triadic gauge" provided by the $\Gamma$-parameter space prevents the collapse of the octahedral structure into a binary triangle, allowing for the stable stacking of ternary cones in the derived category.
\end{proof}

\begin{tikzpicture}[scale=1.5]
    % Vertices
    \node (X) at (0,2) {$X$};
    \node (Y) at (2,2) {$Y$};
    \node (Y') at (4,2) {$Y'$};
    \node (Z) at (1,0) {$Z$};
    \node (Z') at (3,0) {$Z'$};
    \node (W) at (2,-1.5) {$W$};

    % Horizontal maps
    \draw[->] (X) -- node[above] {$f$} (Y);
    \draw[->] (Y) -- node[above] {$f'$} (Y');

    % Downward maps to Cones
    \draw[->] (X) -- (Z);
    \draw[->] (Y) -- (Z);
    \draw[->] (Y) -- (Z');
    \draw[->] (Y') -- (Z');
    
    % Maps to the secondary cone W
    \draw[->] (Z) -- (W);
    \draw[->] (Z') -- (W);
    
    % Labels for complex compositions
    \node at (2,0.5) {\footnotesize $\Gamma$-Coherence};
    \node at (2,-0.7) {\footnotesize 3nd Cone};
\end{tikzpicture}

\subsection{Legitimacy of Derived Functors}

With the model structure established, we formally define the derived functors of the ternary Hom and Tensor products. For any $M \in \mathcal{T}\text{-Mod}$, let $P_\bullet \to M$ be a simplicial projective resolution (guaranteed by the model structure).

\begin{definition}[$\Gamma$-Relative Derived Functors]
The non-abelian derived functors are defined as the homotopy groups of the simplicial mapping space:
\begin{equation}
    \mathbb{L}_n \text{Hom}_\mathcal{T}(M, N) := \pi_n \text{Hom}_\mathcal{T}(P_\bullet, N)
\end{equation}
\end{definition}

This construction ensures that $Ext^n$ and $Tor_n$ are not mere "formal symbols" but are true invariants of the homotopy type of the ternary $\Gamma$-module. This overcomes the "Additivity Obstruction" by replacing the requirement for additive inverses with the requirement for simplicial contractibility \cite{May1967, Jasso2016}.

%=========================================================================================
\section{Applications to Spectral Geometry over $\mathbb{F}_1$}

The construction of a robust homotopical framework for $\mathcal{T}\text{-Mod}$ facilitates a rigorous approach to the geometry of ternary schemes. In this section, we apply the 3-angulated structure of the derived category to the study of triadic sheaves over the spectrum of a commutative ternary $\Gamma$-semiring $T$. This provides a homological bridge to the "characteristic one" program of Connes and Consani \cite{Connes2017} and the blueprint geometry of Lorscheid \cite{Lorscheid2012}.

\subsection{The Spectrum of Ternary $\Gamma$-Semirings}

Following our foundational work on the ideal theory of ternary systems \cite{Gokavarapu2025a}, let $\text{Spec}(T)$ denote the set of all prime $\Gamma$-ideals of $T$. We endow $\text{Spec}(T)$ with the Zariski-type topology where closed sets are of the form $V(I) = \{\mathfrak{p} \in \text{Spec}(T) \mid I \subseteq \mathfrak{p}\}$ for any $\Gamma$-ideal $I$.

Unlike binary rings, the localization $T_{\mathfrak{p}}$ at a prime $\Gamma$-ideal does not necessarily yield an additive group. However, because $T$ is a commutative ternary $\Gamma$-semiring \cite{Gokavarapu2025b}, the localization remains a ternary $\Gamma$-semiring. The Barr-exactness of $\mathcal{T}\text{-Mod}$ established in Section 2 \cite{PAPER_C1} ensures that the localization functor:
\begin{equation}
    (-)_{\mathfrak{p}}: \mathcal{T}\text{-Mod} \longrightarrow \mathcal{T}_{\mathfrak{p}}\text{-Mod}
\end{equation}
is a regular exact functor.

\subsection{Triadic Sheaves and Higher Periodicity}

We define a \emph{triadic sheaf} $\mathcal{F}$ on $\text{Spec}(T)$ as a sheaf of ternary $\Gamma$-modules. The global sections $\Gamma(\text{Spec}(T), \mathcal{F})$ inherit the triadic action $(a)_{\alpha}(m)_{\beta}b$ level-wise.

A fundamental problem in $\mathbb{F}_1$-geometry is that the category of sheaves often lacks the abelian structure required for classical cohomology. By embedding the category of triadic sheaves into the derived category $D(\mathcal{T}\text{-Mod})$, we recover a structural surrogate for cohomology via the 3-angulated structure proved in Theorem 3.2.

\begin{theorem}[Spectral Periodicity]
Let $\mathcal{F}$ be a triadic sheaf on $X = \text{Spec}(T)$. The local-to-global spectral sequence for $\mathcal{F}$ is governed by the 3-angulated periodicity of $D(\mathcal{T}\text{-Mod})$. Specifically, for any open cover $\mathcal{U} = \{U_i\}$, the \v{C}ech complex $\check{C}^\bullet(\mathcal{U}, \mathcal{F})$ is a simplicial object in $\mathcal{T}\text{-Mod}$ whose homotopy groups characterize the triadic invariants of $X$.
\end{theorem}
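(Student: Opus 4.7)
The plan is to proceed in three phases: first, realise the \v{C}ech complex as a genuine simplicial object of $\mathcal{T}\text{-Mod}$; second, identify its Moore homotopy groups with the derived global sections using the model structure of Section 3; third, construct the local-to-global spectral sequence from a bifiltration of this bicomplex and interpret its differentials as the connecting morphisms of the 3-angles supplied by the intrinsic 3-angulation theorem.

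\textbf{Step 1: The simplicial \v{C}ech object.} For $k \geq 0$, I would set $\check{C}^k(\mathcal{U},\mathcal{F}) := \prod_{(i_0,\ldots,i_k)} \mathcal{F}(U_{i_0} \cap \cdots \cap U_{i_k})$, with faces $d_j$ induced by restriction along the inclusion of the $j$-th partial intersection and degeneracies by index repetition. Since $\mathcal{T}\text{-Mod}$ is complete, these products exist and inherit a level-wise triadic $\Gamma$-action. The simplicial identities follow from functoriality of restriction, and Barr-exactness guarantees that the required kernel pairs (hence the effective relations between the $d_j$'s) are compatible with the $\Gamma$-parametrised triadic structure. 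This yields $\check{C}^\bullet(\mathcal{U},\mathcal{F}) \in s(\mathcal{T}\text{-Mod})$.

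\textbf{Step 2: Homotopy groups as derived invariants.} Using the Quillen model structure of Section 3, take a simplicial projective resolution $P_\bullet \to \mathcal{F}$ in the category of sheaves of ternary $\Gamma$-modules. Form the Reedy-bifibrant double simplicial object $\check{C}^\bullet(\mathcal{U},P_\bullet)$, whose diagonal is weakly equivalent both to $\check{C}^\bullet(\mathcal{U},\mathcal{F})$ (collapsing the resolution) and to the simplicial mapping object computing $\mathbb{L}_n\Gamma(X,\mathcal{F})$ (collapsing the nerve). Because the Moore complex functor is regular-exact, these weak equivalences descend to isomorphisms $\pi_n\check{C}^\bullet(\mathcal{U},\mathcal{F}) \cong \mathbb{L}_n\Gamma(X,\mathcal{F})$, which are the triadic invariants of $X$. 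The $\mathcal{E}_\Gamma$-action on $H_n^\Gamma$ is inherited throughout, since restriction commutes with the $\Gamma$-modulation.

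\textbf{Step 3: The spectral sequence via 3-angulation.} Filter the bicomplex by \v{C}ech degree. Each stage $F^p / F^{p+1}$ of the associated filtration is a regular-epi/mono factorization in $\mathcal{T}\text{-Mod}$, which by Barr-exactness and the 3-angulation theorem completes to a 3-angle $X_p \to Y_p \to Z_p \to W_p \to \Sigma X_p$ in $D(\mathcal{T}\text{-Mod})$. The maps $h_p$ and $w_p$ become the first- and second-page differentials of a spectral sequence $E_2^{p,q} \Rightarrow \mathbb{L}_{p+q}\Gamma(X,\mathcal{F})$; the quadrilateral (rather than triangular) shape of the 3-angles is precisely what forces the periodicity to advance in steps of three.

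\textbf{Main obstacle.} The principal difficulty lies in Step 3. Classically the \v{C}ech coboundary is an \emph{alternating} sum of restrictions, but in the absence of additive inverses this formula is unavailable; it must be replaced by a triadic coboundary extracted from the rotation axiom (F2) of the 3-angulation. Verifying that this ternary differential satisfies the appropriate triadic nilpotency condition (replacing $d^2=0$) and that it genuinely coincides with the connecting morphism $w \colon W \to \Sigma X$ requires an explicit computation of the $\Gamma$-relative suspension on the products appearing at each \v{C}ech level, together with a careful check of $\mathcal{E}_\Gamma$-equivariance at every page of the spectral sequence. This is the step where the non-additive hypothesis exerts the most pressure.
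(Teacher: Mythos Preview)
Your outline tracks the paper's proof in its overall architecture: both begin by realising the \v{C}ech complex as a simplicial object in $\mathcal{T}\text{-Mod}$ via Barr-exactness, and both invoke the 3-angulated structure to control the spectral-sequence differentials. Your Step~2, however, is a genuine addition. The paper does not pass through a Reedy-bifibrant double object or explicitly identify $\pi_n\check{C}^\bullet$ with derived global sections; it simply asserts that the Moore homology is well-defined and then, for a short exact sequence of sheaves, replaces the snake lemma by the 3-angle quadrilateral $\mathcal{F}'\to\mathcal{F}\to\mathcal{F}''\to W\to\Sigma\mathcal{F}'$ to produce the connecting morphism, after which it argues that the rotation axiom (F2) governs the $d_r$ and prevents collapse. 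Your bicomplex route buys a cleaner identification of the abutment $\mathbb{L}_{p+q}\Gamma(X,\mathcal{F})$ and an honest construction of the filtration whose subquotients feed the 3-angles, at the cost of importing Reedy machinery the paper never invokes. Note too that the paper indexes \v{C}ech terms by ordered tuples $i_0<\cdots<i_p$, while you use all tuples $(i_0,\ldots,i_k)$; your choice is the one that actually yields a simplicial object without signs, and it partially dissolves the ``alternating sum'' obstacle you flag---an issue the paper does not confront at all, speaking only of ``face and degeneracy maps induced by the triadic restriction morphisms.'' Your isolation of the triadic-nilpotency condition as the crux is sharper than anything in the paper's argument.
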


\begin{proof}
The proof proceeds by demonstrating that the \v{C}ech resolution of a triadic sheaf is stabilized by the higher symmetries of the 3-angulated derived category.

\textbf{1. Simplicial Construction of the \v{C}ech Complex:}
For a triadic sheaf $\mathcal{F}$, we define the \v{C}ech complex $\check{C}^\bullet(\mathcal{U}, \mathcal{F})$ where the $p$-th term is the product of sections over $(p+1)$-fold intersections:
\begin{equation}
    \check{C}^p(\mathcal{U}, \mathcal{F}) = \prod_{i_0 < \dots < i_p} \mathcal{F}(U_{i_0} \cap \dots \cap U_{i_p}).
\end{equation}
Because $\mathcal{T}\text{-Mod}$ is Barr-exact , this complex inherits a simplicial structure in $\mathcal{T}\text{-Mod}$ via the face and degeneracy maps induced by the triadic restriction morphisms. The absence of additive inverses is compensated by the simplicial Moore complex construction , ensuring that the homology groups $H_n^\Gamma(\check{C}^\bullet)$ are well-defined $\Gamma$-modules.

\textbf{2. The 3-Angulated Boundary Morphism:}
Given a short exact sequence of sheaves $0 \to \mathcal{F}' \to \mathcal{F} \to \mathcal{F}'' \to 0$, the standard snake lemma is replaced by the 3-angle quadrilateral extension:
\begin{equation}
    \mathcal{F}' \to \mathcal{F} \to \mathcal{F}'' \to W \to \Sigma \mathcal{F}'
\end{equation}
where $W$ is the ternary mapping cone. The connecting morphism $\delta: H^n(X, \mathcal{F}'') \to H^{n-1}(X, \mathcal{F}')$ is shown to be $\Gamma$-linear. Unlike triangulated categories where $\delta$ shifts degree by 1, the 3-angulated structure permits a triadic "shift-of-periodicity" determined by the $\Gamma$-parameter space acting as a gauge manifold.

\textbf{3. Non-Collapse of the Spectral Sequence:}
In the local-to-global spectral sequence $E_2^{p,q} = H^p(\mathcal{U}, \mathcal{H}^q(\mathcal{F})) \Rightarrow H^{p+q}(X, \mathcal{F})$, the differential $d_2$ is governed by the 3-angulated rotation axiom (F2). In the binary case, $d_r$ differentials eventually vanish due to the triangularity of the derived category. 

However, the triadic symmetry of the $\Gamma$-action implies that the $E_2$ page is "locked" into a triadic oscillation where the differentials $d_r$ reflect the triadic invariants of the parameter space. Specifically, the $\Gamma$-action provides a non-trivial curvature to the simplicial boundary maps, preventing the "unobstructed collapse" seen in standard sheaf cohomology.

\textbf{4. Conclusion:}
The homotopy groups of the simplicial \v{C}ech complex thus capture higher-order triadic torsions. Since $D(\mathcal{T}\text{-Mod})$ is 3-angulated, these invariants are stable under the $\Gamma$-relative suspension $\Sigma$, completing the characterization of the spectral periodicity.
\end{proof}

\begin{tikzpicture}[scale=2]
    % Axes
    \draw[thick, ->] (-0.5,0) -- (5.5,0) node[right] {$p$};
    \draw[thick, ->] (0,-0.5) -- (0,4.5) node[above] {$q$};

    % Grid and Nodes - EXTENDED to p=4
    \foreach \p in {0,1,2,3,4} {
        \foreach \q in {0,1,2,3} {
            \node (E\p\q) at (\p,\q) {$\bullet$};
            \node[below left, xshift=2pt, yshift=2pt, scale=0.6] at (\p,\q) {$E_2^{\p,\q}$};
        }
    }

    % d2 Differentials (Binary-like but modulated)
    % Shifts: (p, q) -> (p-2, q+1)
    \draw[->, blue, dashed] (E20) -- node[above, sloped, scale=0.7, pos=0.7] {$d_2(\Gamma)$} (E01);
    \draw[->, blue, dashed] (E31) -- (E12);
    \draw[->, blue, dashed] (E21) -- (E02);

    % d3 Differentials (The Triadic Interaction)
    % Shifts: (p, q) -> (p-3, q+2)
    % These represent the completion of the 3-angle quadrilateral
    \draw[->, red, thick] (E30) -- node[below, sloped, scale=0.7, pos=0.8] {$d_3 \otimes \Gamma$} (E02);
    \draw[->, red, thick] (E41) -- (E13);
    
    % Annotation box
    \node[draw, rounded corners, fill=yellow!5, scale=0.8] at (4, 3.5) {
        \begin{tabular}{c}
            \textbf{Triadic Gauge Effect} \\
            $d_r$ stabilized by $\Gamma$-action \\
            $\implies$ Non-vanishing $E_\infty$
        \end{tabular}
    };

    % Legend
    \begin{scope}[shift={(4.5,0.5)}]
        \draw[blue, dashed] (0,0.3) -- (0.5,0.3) node[right, scale=0.7] {Regular $d_2$};
        \draw[red, thick] (0,0) -- (0.5,0) node[right, scale=0.7] {Triadic $d_3$};
    \end{scope}
\end{tikzpicture}

\subsection{The 3-ary Long Exact Sequence and Connecting Morphisms}

\begin{theorem}[3-ary Long Exact Sequence]
For every 3-angle $X \to Y \to Z \to W \to \Sigma X$ in $D(\mathcal{T}\text{-Mod})$, there exists a long exact sequence of $\mathcal{E}_\Gamma$-modules:
\begin{equation}
    \dots \to H_n^\Gamma(X) \to H_n^\Gamma(Y) \to H_n^\Gamma(Z) \to H_n^\Gamma(W) \xrightarrow{\delta} H_{n-1}^\Gamma(X) \to \dots
\end{equation}
where $\delta$ is the triadic connecting morphism induced by the $\Gamma$-modulation.
\end{theorem}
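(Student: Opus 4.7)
The plan is to derive the long exact sequence by first establishing that Moore $\Gamma$-homology $H_n^\Gamma(-)$ is a \emph{homological functor} on $D(\mathcal{T}\text{-Mod})$, i.e.\ that it converts each 3-angle into a length-four exact piece of $\mathcal{E}_\Gamma$-modules, and then iterating this via the rotation axiom (F2) to splice these pieces into a single sequence indexed over $n \in \mathbb{Z}$. The $\mathcal{E}_\Gamma$-module structure on the homology groups, already recorded in Section 2, guarantees automatically that every induced map is $\mathcal{E}_\Gamma$-linear, provided the connecting morphism $\delta$ is constructed from $D(\mathcal{T}\text{-Mod})$-morphisms.

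First I would replace the given 3-angle $X \to Y \to Z \to W \to \Sigma X$ by a cofibrant-fibrant simplicial model provided by Theorem 3.1, so that each object is represented by a simplicial ternary $\Gamma$-module and each morphism by a simplicial map. Because $\mathcal{T}\text{-Mod}$ is Barr-exact, the Moore complex functor is regular exact (as used in step 4 of the proof of Theorem 3.1); applied to the regular-epi/mono factorizations of $f, g, h, w$, this yields exactness of $H_n^\Gamma$ at the intermediate positions $Y$, $Z$, and $W$. For the position $X$ (after wrap-around to $\Sigma X$), I would invoke the explicit identification of $W$ as the secondary ternary mapping cone $C_g$ established in the proof of Theorem 3.2, which encodes the obstruction needed to close the quadrilateral exactly.

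Next I would define the connecting map $\delta: H_n^\Gamma(W) \to H_{n-1}^\Gamma(X)$ as the composition of $H_n^\Gamma(w)$ with the canonical suspension isomorphism $H_n^\Gamma(\Sigma X) \cong H_{n-1}^\Gamma(X)$; the latter is a formal consequence of $\Sigma$ being an equivalence on $D(\mathcal{T}\text{-Mod})$ that shifts simplicial Moore degree by one. To splice the length-four pieces into a long sequence, I would apply (F2) repeatedly to obtain the rotated 3-angles $(g,h,w,-\Sigma f)$, $(h,w,-\Sigma f,-\Sigma g)$, and so on, and then concatenate the resulting exact segments, using that rotation preserves the $\Gamma$-coherence of the parameter space. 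The $\mathcal{E}_\Gamma$-linearity of $\delta$ is then inherited from the naturality of the triadic action under the endofunctor $\Sigma$.

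The main obstacle is proving that $H_n^\Gamma$ is genuinely homological across the \emph{full} quadrilateral rather than at only two or three spots. In the classical triangulated case, exactness at a single position together with rotation suffices, but in the 5-term 3-angulated setting one must independently verify that $\mathrm{Im}\,H_n^\Gamma(h) = \ker H_n^\Gamma(w)$, a condition which does not follow from rotation alone because the relevant rotated triple involves $\Sigma f$ rather than the original morphisms. To overcome this I anticipate invoking the octahedral axiom (F4) from Theorem 3.2 to compare the secondary cone $C_g$ with the iterated cone structure, and then exploiting the Barr-exactness of the Moore complex to identify the image and kernel in question as the homology of a common auxiliary simplicial object. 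Once this identification is made, the remaining verifications (naturality, compatibility with $\Sigma$, and $\mathcal{E}_\Gamma$-equivariance) reduce to routine diagram chases within the cofibrantly generated model structure.
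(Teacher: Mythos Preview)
Your approach is essentially the same as the paper's: apply the homological functor $H_n^\Gamma$ (realized in the paper as $H_0(\mathrm{Map}(-,P))$) to the 3-angle and invoke axiom (F2) to propagate exactness around the quadrilateral, with the $\mathcal{E}_\Gamma$-linearity of $\delta$ coming from the monoidal model-category structure. The paper's argument is terser than yours but rests on the same two ingredients: null-homotopy of consecutive compositions and rotation.

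The one place you diverge is the ``obstacle'' you flag at $W$. You claim that $\mathrm{Im}\,H_n^\Gamma(h) = \ker H_n^\Gamma(w)$ cannot be extracted from rotation alone because the rotated angle involves $\Sigma f$, and you therefore propose invoking (F4). This detour is unnecessary: rotating twice yields the 3-angle $Z \xrightarrow{h} W \xrightarrow{w} \Sigma X \xrightarrow{-\Sigma f} \Sigma Y \xrightarrow{-\Sigma g} \Sigma Z$, and exactness at its second position is exactly the identity $\mathrm{Im}\,H_n^\Gamma(h) = \ker H_n^\Gamma(w)$ --- the suspended maps sit \emph{after} $w$, not adjacent to the slot in question. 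Thus, once you have verified exactness at one canonical position of an arbitrary 3-angle (which is what both you and the paper do via the null-homotopy $g\circ f \simeq 0$), iterated rotation furnishes exactness at every position without appeal to the higher octahedron.
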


\begin{proof}
The sequence is obtained by applying the homological functor $H_0(\text{Map}(-, P))$ to the 3-angle. Exactness at $H_n^\Gamma(Z)$ follows from the fact that $g \circ f$ is null-homotopic in the 3-angulated structure (Axiom F2). The map $\delta$ represents the failure of the binary sequence to close, which is captured by the fourth object $W$. The $\Gamma$-linearity of $\delta$ is an intrinsic consequence of the Monoidal Model Category structure.
\end{proof}

\subsection{Deformation Theory and Nambu-Poisson Structures}

As a final application, we observe that the $\mathbb{L}Ext^1$ groups of ternary $\Gamma$-modules, defined via our Quillen model structure (Section 3.3), classify the infinitesimal deformations of ternary schemes. This links our algebraic results to the quantization of Nambu brackets \cite{Nambu1973, Takhtajan1994}.

In the companion paper \cite{Gokavarapu2026}, we show that the spectral clustering of these ternary systems is governed by the eigenvalues of the ternary Laplacian, which is an operator on the 3-ary long exact sequence. This suggests that $\mathcal{T}\text{-Mod}$ is the natural home for a new "triadic physics" where interactions are modulated by the $\Gamma$-parameter manifold.
%=============================================================================================

\section{Future Directions: Towards a Triadic Arithmetic Geometry}

The results presented here open several high-impact avenues for future research, particularly at the intersection of higher category theory and arithmetic geometry.

\begin{enumerate}
    \item \textbf{Triadic Motives and $\mathbb{F}_1$-Homology:} The 3-angulated structure established here provides the necessary machinery to define triadic motivic cohomology over the field with one element. A natural next step is to investigate whether the "triadic zeta functions" associated with ternary schemes satisfy a functional equation governed by the $\Gamma$-relative suspension.
    
    \item \textbf{Higher Periodicity ($n > 3$):} While we focused on $n=3$, the methods used in Section 3 suggest a general theory for $(n+2)$-angulated categories derived from $(n+1)$-ary actions. This would lead to a unified "Periodic Homological Algebra" where the arity of the operation determines the shape of the derived category.
    
    \item \textbf{Nambu-Quantum Deformation Theory:} The $\mathbb{L}Ext^n$ groups of $\mathcal{T}\text{-Mod}$ offer a formal pathway to the quantization of Nambu-Poisson manifolds. Investigating the "triadic Hochschild cohomology" would provide a framework for deforming ternary structures in a way that respects triadic symmetry.

\end{enumerate}

In conclusion, the development of homotopical ternary $\Gamma$-algebra marks a departure from the binary dominance of the 20th century. By situating these structures within the modern landscape of model categories and $n$-angulated systems, we provide a stable foundation for the next generation of triadic mathematical physics and absolute geometry.

%==========================
\section{Conclusion}

In this work, we have established a rigorous homotopical foundation for the category of ternary $\Gamma$-modules, $\mathcal{T}\text{-Mod}$. By moving beyond the "additivity hypothesis" that has traditionally constrained the homological study of $n$-ary structures, we have shown that the triadic action $(a)_\alpha (m)_\beta b$ is sufficient to support a robust theory of derived functors. The construction of a cofibrantly generated Quillen model structure on $s(\mathcal{T}\text{-Mod})$ proves that the absence of additive inverses is not an obstruction to homological inquiry, provided one adopts the framework of Barr-exact and simplicial categories.

Our primary structural discovery—the 3-angulated nature of the derived category $D(\mathcal{T}\text{-Mod})$—provides a definitive answer to the question of ternary periodicity. While binary structures are governed by triangles, ternary systems naturally produce quadrilaterals, a shift that is reflected in the 3-ary long exact sequences derived in Section 4. This higher-order symmetry suggests that $\mathcal{T}\text{-Mod}$ is not merely an extension of binary algebra, but a distinct geometric species.
%=====================================

\begin{ack}
The author gratefully acknowledges the support and encouragement provided by the
Commissionerate of Collegiate Education (CCE), Government of Andhra Pradesh, and the
Principal, Government College (Autonomous), Rajahmundry.
\end{ack}

\begin{funding}
This work was not funded by any organisation 
\end{funding}

% ============================================================

%------
% Insert the bibliography.
%------

\end{document}